\newtheorem{thm}{Theorem}[section]
\newtheorem{definition}{Definition}[section]
\def\ts{\thinspace}
\def\C{\mathbb{C}}
\def\R{\mathbb{R}}
\def\N{\mathbb{N}}
\def\u{{\bf u}}
\def\v{{\bf v}}
\def\w{{\bf w}}
\def\A{{\bf A}}
\def\E{{\bf E}}
\def\0{{\bf 0}}
\def\B{{\bf B}}
\def\I{{\bf I}}
\def\c{{\bf c}}
\def\H{{\bf H}}
\title{Spijker's example and its extension}
\author{Miklós E. Mincsovics}
\affil{{\normalsize MTA-ELTE Numerical Analysis and Large Networks Research Group, Hungary}\\
{\normalsize Budapest University of Technology and Economics, Department of Differential Equations, Hungary}}
\begin{document}
\maketitle

\begin{abstract}
Strongly and weakly stable linear multistep methods can behave very differently. The latter class can produce spurious oscillations in some of the cases for which the former class works flawlessly. The main question is if we can find a well defined property which clearly tells the difference between them. There are many explanations from different viewpoints. We cite Spijker's example which shows that the explicit two step midpoint method is unstable with respect to the Spijker norm. We show that this result can be extended for the general weakly stable case.
\end{abstract}

\textbf{Keywords:} linear multistep methods, stability, Spijker-norm

\textbf{MSC codes:} 65L06, 65L20

\section{Introduction}

This paper focuses on the stability and instability of linear multistep methods. When we introduce linear multistep methods it is unavoidable to talk about the root-condition and usually about the two types of it, which divide these methods into two classes: the weakly and strongly stable linear multistep methods. This can be found in almost every textbook about the numerical solution of ordinary differential equations, see e.g. \cite[Section 5.2.3]{AP}. The root-condition is closely related to the stability of linear multistep methods. As it is well-known stability together with consistency implies the convergence of the method. This result can be obtained in different setups, we follow the book \cite{S73} where stability, consistency and convergence are defined in a general sense forming the base of a beautiful theoretical framework. This book also gives a detailed description how to use this framework for nonlinear ODEs. Our intention is to clarify the relation of the strongly/weakly stable linear multistep methods and stability of linear multistep methods in the above mentioned setting. Spijker's example \cite{S} gave the first (negative) result about this relation. The example shows that the explicit two step midpoint method is unstable with respect to the general notion of stability if we use an unusual norm. We extend this example to the whole weakly stable class.

We organized the paper as follows.
We introduce linear multistep methods and their basic notions which are important for us, including the definition of weakly/strongly stable linear multistep methods. Then we reformulate linear multistep methods and define stability in the general sense. After this preparation we recall Spijker's example and finally we present the new result which generalizes Spijker's example. We conclude the paper with a critical remark.

\section{Stability of linear multistep methods}

Without loss of generality we consider the scalar autonomous \emph{initial value problem} (IVP)
\begin{equation}\label{IVP}
\begin{cases}
u(0) = u^{0}\ts,\\
\dot u(t) = f(u(t))\ts,
\end{cases}
\end{equation}
where $t\in[0,T],\ u^{0}\in\R$ is the initial value, $u:[0,T]\to\R$ is the unknown function and we assume
that $f$ is Lipschitz continuous.

In practice we have to use a numerical method to approximate the solution of \eqref{IVP} since finding the solution analytically is impossible in most of the cases. There are many possible choices, one is the application of a linear multistep method (LMM).
\smallskip

\emph{Linear multistep method}s can be given in the following way:
\begin{equation}\label{LMM}
\begin{cases}
u_i = c^{i}\ts,&\quad i=0,\ldots, k-1\\[8pt]
\dfrac{1}{h} \sum\limits_{j=0}^{k} \alpha_{j} u_{i-j}= \sum\limits_{j=0}^{k} \beta_{j} f(u_{i-j})\ts,&\quad i=k,\ldots, n+k-1=N\ts,
\end{cases}
\end{equation}
where $h=T/N$ is the step size, $\alpha_{j}$, $\beta_{j}\in\R$, $\alpha_{0}\neq 0$ are the coefficients of the method and the constants $c^{i}$ are some approximation of the solution on the first $k$ time levels. When these latter are known (here we do not go into the details how to determine these values since this is irrelevant to the results of the paper) the method can "run", we can calculate the next approximation and so on. To get $u_i$ which approximates the solution at the $i$-th time level $u(i\cdot h)$ we only need to know the previous $k$ approximations. Thus the formula represents a $k$-step method. Note that while  $k$ is fixed for the method $n$, $N=k+n-1$ and $h$ can vary as the grid gets finer. For shorthand notation later we will use $f_{i-j}$ for $f(u_{i-j})$.
As an example consider the explicit two step midpoint method (sometimes called leapfrog scheme in the context of parabolic PDEs)
\begin{equation}\label{midpoint}
\begin{cases}
u_i = c^{i}\ts,&\quad i=0,1\\[8pt]
\dfrac{1}{h} \left( \frac{1}{2}u_i-\frac{1}{2}u_{i-2}\right) = f_{i-1}\ts,&\quad i=2,\ldots, n+k-1=N
\end{cases}
\end{equation}
which plays the main role in Spijker's example.

The \emph{first characteristic polynomial} associated to \eqref{LMM} is defined as
\begin{equation*} 
\varrho(z)=\sum\limits_{j=0}^{k} \alpha_{j} z^{k-j}\ts.
\end{equation*}
Usually, two types of root-conditions are defined. These are presented below.
\begin{definition}\label{d:rootcondition}
The method is said to be \emph{strongly stable} if for every root $\xi_i\in\C$ of the first characteristic polynomial $|\xi_i|< 1$ holds except $\xi_{1}= 1$, which is a simple root.

A not strongly stable method is said to be \emph{weakly stable} if for every root $\xi_i\in\C$ of the first characteristic polynomial $|\xi_i|\leq 1$ holds and if $|\xi_i|= 1$ then it is a simple root, moreover $\xi_{1}= 1$.
\end{definition}
We note that sometimes these are defined slightly differently. The two main possible differences are the following.
First, not requiring that $\xi_{1}= 1$ holds. Second, the weakly stable class containing the strongly stable class. Our reason not to vote for this option is that we want to distinguish clearly between the two.

Roughly speaking being weakly (or strongly) stable means that applying a method for $\dot u(t)=0$ the approximation remains bounded which is an understandable requirement.

The explicit two step midpoint method is weakly stable since its first characteristic polynomial is $\frac{1}{2}\left( z^{2}-1 \right) $ with roots $z=\pm 1$.

In the weakly stable case we have another root at the boundary of the unit circle which could cause problems in some of the cases. One type of explanation about the difference between weakly and strongly stable LMMs tries to exploit this fact directly, see eg. \cite[Example 5.7]{AP}. Our approach is different.
\smallskip

In the following we rewrite LMMs \eqref{LMM} into the form for which we can define stability in the general sense. A method can be represented with a sequence of operators $F_N: \mathcal{X}_N\to \mathcal{Y}_N$, where $\mathcal{X}_N$, $\mathcal{Y}_N$ are $k+n$ dimensional normed spaces with norms $\left\|\cdot\right\|_{\mathcal{X}_N}$, $\left\|\cdot\right\|_{\mathcal{Y}_N}$ respectively and
\begin{equation*} 
(F_N(\u_N))_i=
\begin{cases}
u_i - c^{i}\ts,& \quad i=0,\ldots, k-1\\[8pt]
\dfrac{1}{h} \sum\limits_{j=0}^{k} \alpha_{j} u_{i-j} - \sum\limits_{j=0}^{k} \beta_{j} f(u_{i-j})\ts,& \quad i=k,\ldots, n+k-1=N\ts.
\end{cases}
\end{equation*}
Finding the approximating solution means that we have to solve the non-linear system of equations $F_N(\u_N) =\0$. $F_N$ can be represented in the following way:
\begin{equation*}
F_N(\u_N) =\A_N \u_N -\B_N f(\u_N)-\c_N\ts,
\end{equation*}
where $\u_{N}=(\u_{k},\u_{n})^{T}=(u_0,\ldots,u_{k-1},u_k,\ldots,u_{n+k-1})^{T}\in \R^{k+n}$, $\u_{k}\in \R^{k}$, $\u_{n}\in \R^{n}$, \\
$f(\u_N)= (f(u_0),f(u_1),\ldots,f(u_{n+k-1}))^{T}\in \R^{k+n}$, $\c_n= (c^{0},c^{1},\ldots,c^{k-1},0,\ldots,0)^{T}\in \R^{k+n}$,
$$
\A_N =\left( \begin{array}{cc}
        \I & \0  \\
        \A_{k} & \A_{n} \\
        \end{array} \right) \ts,
\qquad
\B_N =\left( \begin{array}{cc}
        \0 & \0  \\
        \B_{k} & \B_{n} \\
        \end{array} \right) \ts,
$$
where $\I\in \R^{k\times k}$ is the identity matrix, $\A_{k},\B_{k}\in \R^{n\times k}$, $\A_{n},\B_{n}\in \R^{n\times n}$,
$$
\A_{k} =\dfrac{1}{h}\left( \begin{array}{cccc}
        \alpha_k &  \ldots & \alpha_2 & \alpha_1 \\
        0 & \alpha_k &  \ldots  & \alpha_2 \\
        \vdots &  \ddots & \ddots & \vdots \\
        0 &  \ldots & \ldots & \alpha_k \\
        0 &  \ldots & \ldots & 0 \\
        \vdots  & \ddots & \ddots & \vdots \\
        0 &  \ldots & \ldots & 0 \\
        \end{array} \right)
\quad
\A_{n} =\dfrac{1}{h}\left( \begin{array}{cccccc}
        \alpha_0 & 0 & \ldots & \ldots & \ldots & 0 \\
        \alpha_1 & \alpha_0 & 0 & \ldots & \ldots & 0 \\
        \alpha_2 & \alpha_1 & \alpha_0 & 0 & \ldots & 0 \\
        \vdots & \ddots & \ddots & \ddots & \ddots & \vdots \\
        \vdots & \ddots & \ddots & \ddots & \ddots & \vdots \\
        0 & \ldots & 0 & \alpha_k & \ldots & \alpha_0
        \end{array} \right)
$$
and $\B_{k}$, $\B_{n}$ are the same as $\A_{k}$, $\A_{n}$, except that we have to omit the $\frac{1}{h}$ factor and the $\alpha$-s have to be changed to $\beta$-s.

\begin{definition}\label{d:stab}
We call a method \emph{stable in the norm pair} $\left( \left\|\cdot\right\|_{\mathcal{X}_n} \ts, \left\|\cdot\right\|_{\mathcal{Y}_n}\right)$ if for all IVP \eqref{IVP} $\exists S\in\R$ and $\exists N_0\in\N$ such that $\forall N\geq N_0 $\ts, $\forall \u_N, \v_N\in \R^{k+n}$ the estimate
\begin{equation}\label{Sstability}
\left\| \u_N- \v_N \right\|_{\mathcal{X}_N} \leq S \left\| F_N (\u_N)- F_N (\v_N) \right\|_{\mathcal{Y}_N}
\end{equation}
holds.
\end{definition}
To define stability in this way has a definite profit. It is general in the sense that it works for almost every type of numerical method approximating the solution of ODEs and PDEs as well. Convergence can be proved by the popular recipe "consistency + stability = convergence"
$$\left\| \varphi_N (\bar{u}) - \bar{\u}_N \right\|_{\mathcal{X}_N}\leq S \left\| F_N (\varphi_N(\bar{u}))- F_N (\bar{\u}_N) \right\|_{\mathcal{Y}_N} =
S \left\| F_N (\varphi_N(\bar{u})) \right\|_{\mathcal{Y}_N}\to 0\ts,
$$
where $\bar{u}$, $\bar{\u}_N$ denote the solution of the original problem \eqref{IVP} and the approximating problem $F_N (\u_N)=\0$ respectively, $\varphi_N:\mathcal{X}\to \mathcal{X}_N$ are projections from the normed space where the original problem is set, thus $\varphi_N (\bar{u}) - \bar{\u}_N$ represents the error (measured in $\mathcal{X}_N$). Finally, $\left\| F_N (\varphi_N(\bar{u})) \right\|_{\mathcal{Y}_N}\to 0$ is exactly the definition of consistency in this framework. We note that the existence of $\bar{\u}_N$ (from some index) is also the consequence of stability, see \cite[Lemma 24. and 25.]{FMF}, cf. \cite[Lemma 1.2.1]{S73}. There are many versions of Definition \ref{d:stab} which are requiring the stability estimate only in some neighbourhood, see \cite{FMF}, but as we defined it is satisfactory for the IVP \eqref{IVP}.

In the following we introduce norm pairs which are interesting for us. We start with some norm notations: for $k\in \N$ fixed, $\u_{N}\in \R^{k+n}$ the $k\infty$ norm is defined as
$$\left\|\u_{N}\right\|_{k\infty}=\max_{0\leq i\leq k-1}|u_{i}|+\max_{k\leq i\leq N}|u_{i}|\ts,$$
thus $\left\|\u_{N}\right\|_{k\infty}=\left\|\u_{k}\right\|_{\infty}+\left\|\u_{n}\right\|_{\infty} \ts.$
While the $k$--Spijker-norm is defined as
$$\left\|\u_{N}\right\|_{k\$}=\max_{0\leq i\leq k-1}|u_{i}|+h\max_{k\leq l\leq N}\left| \sum\limits_{i=k}^{l} u_{i}\right| \ts.$$
Using the notation $\left\|\u_{n}\right\|_{\$}=h\max_{k\leq l\leq N}\left| \sum\limits_{i=k}^{l} u_{i}\right|$ the $k$--Spijker-norm can be expressed as
$ \left\|\u_{N}\right\|_{k\$}=\left\|\u_{k}\right\|_{\infty}+\left\|\u_{n}\right\|_{\$}\ts.$ Introducing another notation, the Spijker-norm can be given in a useful way which will be presented in the following.

First, we introduce
$\E_{n}\in \R^{n\times n}$
$$
\E_{n}=
\dfrac{1}{h}\left( \begin{array}{ccccc}
                  1 & 0 & \ldots & \ldots & 0 \\
                  -1 & 1 & 0 & \ldots & 0 \\
                  0 & -1 & 1 & 0 & \vdots \\
                  \vdots & \ddots & \ddots & \ddots & \vdots \\
                  0 & \ldots & 0 & -1 & 1
                  \end{array} \right) \quad  \mbox{for which} \quad
                  \E_{n}^{-1}=
                  \left( \begin{array}{ccccc}
                                    h & 0 & \ldots & \ldots & 0 \\
                                    h & h & 0 & \ldots & 0 \\
                                    h & h & h & 0 & \vdots \\
                                    \vdots & \ddots & \ddots & \ddots & \vdots \\
                                    h & h & \ldots & h & h
                                    \end{array} \right) \ts.
$$
Note that $\E_{n}$ represents the linear part of the explicit Euler method (without the initial step) and its inverse can be interpreted as the simplest numerical integration.
Second, if $\A$ is a regular matrix and $\left\| \cdot\right\|_{\star}$ is a norm then 
$\left\| \u\right\|_{\A,\star}=\left\| \A\u\right\|_{\star}$ defines a norm.
Then clearly
$$\left\|\u_{n} \right\|_{\$}=\left\|\E_{n}^{-1}\u_{n} \right\|_{\infty}=\left\|\u_{n} \right\|_{\E_{n}^{-1},\infty}\ts.
$$
\smallskip

It is known that weakly and strongly stable linear multistep methods are stable in the norm pair $\left( \left\|\cdot\right\|_{k\infty}\ts, \left\|\cdot\right\|_{k\infty}\right)$, cf. \cite{M1}. Moreover, strongly stable methods are stable in the $\left( \left\|\cdot\right\|_{k\infty}\ts, \left\|\cdot\right\|_{k\$}\right)$ norm pair, see \cite{M2}.
These are positive results and there is a natural question: are weakly stable methods stable in the $\left( \left\|\cdot\right\|_{k\infty}\ts, \left\|\cdot\right\|_{k\$}\right)$ norm pair or not? The following section is devoted to answer this question.

\section{Spijker's example and its extension}

First we recall Spijker's example, cf. \cite[Example 2 in Section 2.2.4]{S73}.
\begin{thm}\label{thm:S}
The explicit two-step midpoint method \eqref{midpoint} is not stable in the $\left( \left\|\cdot\right\|_{2\infty}\ts, \left\|\cdot\right\|_{2\$}\right)$ norm pair.
\end{thm}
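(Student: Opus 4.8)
The plan is to disprove the stability estimate \eqref{Sstability} by producing, for a single fixed IVP, a sequence of inputs whose $2\$$-residual stays bounded while the $2\infty$-distance grows without bound with $N$; this forces the constant $S$ to be arbitrarily large, so no admissible $S$ can exist. The most economical choice of IVP is the trivial one $\dot u=0$, i.e.\ $f\equiv 0$. Since then $f(\u_N)-f(\v_N)=\0$, the nonlinear term drops out and $F_N(\u_N)-F_N(\v_N)=\A_N(\u_N-\v_N)$, so it suffices to exhibit a single vector $\w\in\R^{k+n}$ (playing the role of $\u_N-\v_N$, with $\v_N=\0$) for which the quotient $\left\|\w\right\|_{2\infty}/\left\|\A_N\w\right\|_{2\$}$ is unbounded as $N\to\infty$.

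First I would take the oscillating ramp $w_i=(-1)^i\ts i$ for $i=0,\ldots,N$. Reading off $\A_N$ for the midpoint method \eqref{midpoint}, the first two components of $\A_N\w$ are $w_0$ and $w_1$, while for $i\ge 2$ the $i$-th component equals $\tfrac{1}{2h}(w_i-w_{i-2})$. The decisive computation is the partial sum appearing in the Spijker norm: using the telescoping identity $\sum_{i=2}^{l}(w_i-w_{i-2})=w_l+w_{l-1}-w_1-w_0$, the factors $h$ and $1/h$ cancel and one obtains
$$h\sum_{i=2}^{l}(\A_N\w)_i=\tfrac12\bigl(w_l+w_{l-1}-w_1-w_0\bigr)\ts.$$
For the ramp, $w_l+w_{l-1}=(-1)^l$ and $w_0+w_1=-1$, so this partial sum equals $\tfrac12\bigl((-1)^l+1\bigr)\in\{0,1\}$ for every $l$. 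Hence $\left\|\A_N\w\right\|_{2\$}=\max(|w_0|,|w_1|)+\max_{2\le l\le N}\bigl|\tfrac12((-1)^l+1)\bigr|=1+1=2$, independently of $N$ and $h$.

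On the other hand $\left\|\w\right\|_{2\infty}=\max(|w_0|,|w_1|)+\max_{2\le i\le N}|(-1)^i i|=1+N$. Therefore the ratio of the two sides of \eqref{Sstability} is $(1+N)/2$, which tends to infinity; given any candidate $S$ and $N_0$ one simply picks $N\ge N_0$ with $N>2S-1$ and sets $\u_N=\w$, $\v_N=\0$ to violate the estimate. The one genuinely creative step is selecting the sequence $w_i=(-1)^i i$: it is precisely the resonant, linearly growing solution of the midpoint recursion, whose associated residual $(\A_N\w)_i=(-1)^i/h$ is huge in the sup-norm yet, being a pure oscillation, has cancelling partial sums and hence a tiny Spijker norm. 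Everything after that choice is the routine telescoping verification above, so I expect the telescoping identity (and the careful treatment of the boundary indices $i=0,1$) to be the only place needing attention rather than a real obstacle.
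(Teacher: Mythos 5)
Your proposal is correct and takes essentially the same approach as the paper: with $f\equiv 0$ and $\v_N=\0$ you pick the resonant oscillating ramp (the paper takes $u_l=(l-1)(-1)^l$ with $u_0=u_1=0$, you take $w_i=(-1)^i\ts i$), so the residual is a pure $\pm\frac{1}{h}$ oscillation whose telescoping partial sums keep the $2\$$-norm bounded while $\left\|\w\right\|_{2\infty}$ grows like $N$. The only deviations are cosmetic: your nonzero $w_1=-1$ adds a harmless boundary term, giving residual norm $2$ where the paper gets $\tfrac{1}{2}$.
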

For the sake of completeness we append the proof.

\begin{proof}
We focus on the explicit two-step midpoint method \eqref{midpoint} and rewrite it to fit into our framework. $k=2$ and 
$F_N(\u_N) =$
\begin{equation*}
                \left( \begin{array}{ccccc}
                1 & 0 & 0 & \ldots  & 0 \\
                0 & 1 & 0 & \ldots  & 0 \\
                -\frac{1}{2h} & 0 & \frac{1}{2h} &  \ldots & 0 \\
                \vdots & \ddots & \ddots &  \ddots & \vdots \\
                0 & \ldots &  -\frac{1}{2h} & 0 & \frac{1}{2h}
                \end{array} \right) 
                \left( \begin{array}{c}
                  u_0  \\
                  u_1 \\
                  u_2 \\
                  \vdots \\
                  u_{N}
                  \end{array} \right) -
				\left( \begin{array}{ccccc}
                0 & 0 & 0 & \ldots  & 0 \\
                0 & 0 & 0 & \ldots  & 0 \\
                0 & 1 & 0 &  \ldots & 0 \\
                \vdots & \ddots & \ddots &  \ddots & \vdots \\
                0 & \ldots &  0 & 1 & 0
                \end{array} \right)
                \left( \begin{array}{c}
                f_0  \\
                f_1  \\
                f_{2} \\
                \vdots \\
                f_{N}
                \end{array} \right)-
                \left( \begin{array}{c}
                                c_0  \\
                                c_1  \\
                                0 \\
                                \vdots \\
                                0
                                \end{array} \right)
\end{equation*}
using its matrix-vector form.

The goal is to show that this method is not stable in the $\left( \left\|\cdot\right\|_{2\infty}\ts, \left\|\cdot\right\|_{2\$}\right)$ norm pair i.e.
\begin{equation*}
\left\| \u_N- \v_N \right\|_{2\infty} \leq S \left\| F_N (\u_N)- F_N (\v_N) \right\|_{2\$}
\end{equation*}
does not hold.
Slightly modifying the original construction we define $f\equiv 0$, $\v_N= \0$ and
\begin{equation*}
u_{l}=
\begin{cases}
&0\quad \mbox{, if}\quad l=0,1\\
&(l-1) (-1)^{l}\quad \mbox{, if}\quad l=2,\ldots, n+1
\end{cases}
\end{equation*}
thus, $\u_N= (0,0,1,-2,3,-4,\dots)^{T}$. With this choice
$$ \left\| \u_N- \v_N \right\|_{2\infty}=\left\| \u_{n}\right\|_{\infty}=n \quad \mbox{and} \quad \left\| F_N (\u_N)- F_N (\v_N) \right\|_{2\$}= \left\| \A_{n} \u_{n}\right\|_{\$},
$$
where we can calculate
$$ \A_{n} \u_{n}=\frac{1}{h}\ts\cdot\ts\left(\frac{1}{2}, -1, 1, -1, 1, \ldots\right)^{T}\quad \mbox{thus}\quad \left\| \A_{n} \u_{n}\right\|_{\$}=\frac{1}{2}\ts.
$$
This means that the stability estimate does not hold.
\end{proof}

In the following we present the extension of this result.
\begin{thm}\label{extS}
Weakly stable methods are not stable in the $\left( \left\|\cdot\right\|_{k\infty}\ts, \left\|\cdot\right\|_{k\$}\right)$ norm pair.
\end{thm}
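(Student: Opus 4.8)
The plan is to mimic the structure of Spijker's proof but to replace the special mode $(-1)^l$ by a mode built from a general boundary root of the first characteristic polynomial. Since the method is weakly stable but not strongly stable, Definition~\ref{d:rootcondition} guarantees that, besides the simple root $\xi_1=1$, the polynomial $\varrho$ possesses at least one further root $\xi$ with $|\xi|=1$, and that this $\xi\neq 1$ is \emph{simple}, so that $\varrho'(\xi)\neq 0$. As in the previous proof I would set $f\equiv 0$, $\v_N=\0$ and $\u_k=\0$; then $F_N(\u_N)-F_N(\v_N)=\A_N\u_N$, the $k\infty$-norm of the difference equals $\|\u_n\|_\infty$, and the $k\$$-norm equals $\|\A_n\u_n\|_{\$}$. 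The whole task is thus to choose $\u_n$ with $\|\u_n\|_\infty\to\infty$ while $\|\A_n\u_n\|_{\$}$ stays bounded independently of $n$.

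For the test vector I would take the linearly amplified boundary mode $u_l=\mathrm{Re}\,(l\,\xi^{\,l})$ for $l=k,\dots,N$ (this is real because the $\alpha_j$ are real, and it reduces exactly to Spijker's $l(-1)^l$ when $\xi=-1$). The key algebraic observation is that although the pure mode $\xi^l$ is annihilated by the recurrence (as $\varrho(\xi)=0$), the amplified mode is not: a short computation using $\sum_{j}\alpha_j\,j\,\xi^{k-j}=-\xi\varrho'(\xi)$ gives, for every interior index $i$,
\[(\A_n\u_n)_i=\tfrac1h\,\mathrm{Re}\bigl(\xi^{\,i-k+1}\varrho'(\xi)\bigr).\]
Here simplicity of $\xi$ enters decisively: it forces $\varrho'(\xi)\neq 0$, so $\A_n\u_n$ is, up to at most $k$ boundary rows, a genuine bounded oscillation of amplitude $|\varrho'(\xi)|/h$.

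It then remains to estimate the two norms. Writing $\|\A_n\u_n\|_{\$}=h\max_{l}\bigl|\sum_{i=k}^{l}(\A_n\u_n)_i\bigr|$, the interior contribution is $h\cdot\tfrac1h$ times a geometric partial sum $\sum_m\xi^{m}$, which is bounded by $2/|\xi-1|$ precisely because $\xi\neq 1$; the finitely many boundary rows add only an $O(1)$ term after the factor $h$. Hence $\|\A_n\u_n\|_{\$}\le C$ with $C$ independent of $n$ and $h$. On the other hand, writing $\xi=e^{i\theta}$ with $\theta\in(0,\pi]$, one has $\|\u_n\|_\infty=\max_{k\le l\le N}|l\cos(l\theta)|$, and $\cos(l\theta)$ is bounded away from $0$ along a suitable arithmetic progression of indices (or is equidistributed when $\theta/\pi$ is irrational), so $\|\u_n\|_\infty\to\infty$. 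The quotient $\|\u_n\|_\infty/\|\A_n\u_n\|_{\$}$ is therefore unbounded, no finite $S$ can satisfy \eqref{Sstability} for all large $N$, and the method is not stable in the stated norm pair.

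The main obstacle I anticipate is not the algebra but two points of care. First, when $\xi$ is genuinely complex one must keep the test vector real while still guaranteeing unbounded growth of $\|\u_n\|_\infty$; taking real parts handles this, and the only thing to verify is that $\cos(l\theta)$ cannot vanish along every residue class. Second, one must make explicit the twin roles of the two hypotheses built into weak stability: simplicity of $\xi$ (i.e.\ $\varrho'(\xi)\neq0$) is what prevents the amplified mode from being annihilated, while $\xi\neq 1$ is what keeps its partial sums a bounded geometric series; if the only boundary root were $\xi_1=1$ the construction would collapse, which is exactly why strongly stable methods remain stable in this norm pair. Controlling the at most $k$ boundary rows is then routine.
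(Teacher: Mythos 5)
Your proposal is correct, but it takes a genuinely different route from the paper's. The paper first changes variables to $\w_{n}=\E_{n}\u_{n}$ and uses the factorization $h\A_{n}=\alpha_0\prod_{i=1}^{k}\left( \I-\xi_i\H_{n}\right)$, so that the factor belonging to the root $\xi_1=1$ (which is exactly $h\E_{n}$) is absorbed by the Spijker norm; it then bounds the remaining factors by submultiplicativity ($\left\|\I-\xi_i\H_{n}\right\|_{\$}\leq 2$) and splits into two cases, $\xi_2=-1$ with $w_m=m(-1)^m$, and a conjugate pair $\xi_2=e^{i\varphi}$, $\xi_3=e^{-i\varphi}$ merged into the real factor $\I-2\cos\varphi\ts\H_{n}+\H_{n}^{2}$ with $w_m=m\ts\Re\ts\xi_2^m$; the actual test vector $\u_{n}=\E_{n}^{-1}\w_{n}$ is thus the discrete integral of the amplified mode, and the refutation takes the form "left side bounded below, right side $\to 0$". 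You instead take the amplified mode itself, $u_l=\Re(l\xi^l)$, and compute the residual directly from $\varrho(\xi)=0$ together with the identity $\sum_{j}\alpha_j\ts j\ts\xi^{k-j}=-\xi\varrho'(\xi)$, obtaining $(\A_{n}\u_{n})_i=\frac{1}{h}\Re\bigl(\xi^{i-k+1}\varrho'(\xi)\bigr)$ in the interior (this computation is correct); the Spijker norm then sees geometric partial sums bounded by $2|\varrho'(\xi)|/|\xi-1|$ — this is where $\xi\neq 1$ does the work — the at most $k$ boundary rows contribute $O(1)$ after the factor $h$, and $\left\|\u_{n}\right\|_{\infty}\to\infty$ along a subsequence by periodicity (rational angle) or equidistribution (irrational angle), so the refutation takes the dual form "right side bounded, left side $\to\infty$". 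Your version buys a uniform argument: no case split, no conjugate-pairing of factors, no operator factorization or norm submultiplicativity, and it degenerates exactly to Spijker's $l(-1)^l$ when $\xi=-1$; the paper's version buys the structural insight that the Spijker norm precisely neutralizes the root $\xi_1=1$. One conceptual slip worth correcting: simplicity of $\xi$ is \emph{not} what your estimate needs. If $\varrho'(\xi)=0$, the interior residual would vanish identically and your upper bound on $\left\|\A_{n}\u_{n}\right\|_{\$}$ would only improve; $\varrho'(\xi)\neq 0$ controls the amplitude of a quantity you want small, so advertising it as "decisive" misattributes its role — simplicity belongs to the hypothesis of weak stability (without it the method is not weakly stable at all), not to your instability mechanism.
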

\begin{proof}
We assume that the method is weakly stable, thus we assume that $|\xi_{2}|=1$, $\xi_{2}\neq 1$. We set $f\equiv 0$, $\v_N=\0$ and $\u_{N}=(0,\ldots,0,u_k,\ldots,u_{n+k-1})^{T}$. For this setting stability \eqref{Sstability} is simplified to
\begin{equation*}
\left\| \u_{n}\right\|_{\infty} \leq S \left\| \A_{n}\u_{n}\right\|_{\$}\ts.
\end{equation*}
For all $S$ and for all $n_0$ we will present a vector $\u_{n}$, $n>n_0$ for which
\begin{equation}\label{ford}
\left\| \u_{n}\right\|_{\infty} > S \left\| \A_{n}\u_{n}\right\|_{\$}\ts.
\end{equation}
Note that
\begin{equation*} 
h\A_{n}=\alpha_0\prod\limits_{i=1}^{k}\left( \I-\xi_i\H_{n}\right)\ts,
\end{equation*}
where $\I\in\R^{n\times n}$ stands for the identity matrix, $\H_{n}\in \R^{n\times n}$ is defined as
$$
\H_{n}=
\left( \begin{array}{ccccc}
                  0 & 0 & \ldots & \ldots & 0 \\
                  1 & 0 & 0 & \ldots & 0 \\
                  0 & 1 & 0 & 0 & \ldots \\
                  \vdots & \ddots & \ddots & \ddots & \vdots \\
                  0 & \ldots & 0 & 1 & 0
                  \end{array} \right) \ts,
$$
and $\xi_i$, $i=1,\ldots,k$ are the roots of the first characteristic polynomial. This comes from the following calculation.
\begin{align*}
&h\A_{n}=\alpha_0\I+\alpha_1\H_{n}+\alpha_2\H_{n}^{2}+\ldots+\alpha_k\H_{n}^{k}= \alpha_k\prod\limits_{i=1}^{k}(\H_{n}-\nu_i\I)=\\
&\alpha_k(-1)^{k}\left( \prod\limits_{i=1}^{k}\nu_i\right) \prod\limits_{i=1}^{k}\left( \I-\dfrac{1}{\nu_i}\H_{n}\right) = \alpha_0\prod\limits_{i=1}^{k}\left( \I-\dfrac{1}{\nu_i}\H_{n}\right)= \alpha_0\prod\limits_{i=1}^{k}\left( \I-\xi_i\H_{n}\right)\ts,
\end{align*}
where we exploited the commutativity of the terms $\left( \I-\xi_i\H_{n}\right)$ and that $\xi_i=\frac{1}{\nu_i}$ since $\alpha_0+\alpha_1 z+\alpha_2 z^{2}+\ldots+\alpha_k z^{k}$ is the reciprocal polynomial of $\varrho$. This covers the case when $\forall \xi_i\neq 0$. If $\exists \xi_i= 0$ the modification of the calculation is straightforward.

Let us introduce $\w_{n}=(w_1,\ldots,w_{n})^{T}\in\R^{n}$, $\w_{n}=\E_{n} \u_{n}$. With this \eqref{ford} is equivalent to
\begin{equation*}
\left\| \w_{n}\right\|_{\$}=\left\| \E_{n}^{-1}\w_{n}\right\|_{\infty}=\left\| \u_{n}\right\|_{\infty} > S \left\| \alpha_0\prod\limits_{i=2}^{k}\left( \I-\xi_i\H_{n}\right)\w_{n}\right\|_{\$}\ts.
\end{equation*}

If $\xi_{2}=-1$ then
$$\left\| \prod\limits_{i=2}^{k}\left( \I-\xi_i\H_{n}\right)\w_{n}\right\|_{\$}\leq 
\left( \prod\limits_{i=3}^{k} \left\|\left( \I-\xi_i\H_{n}\right)\right\|_{\$}\right) 
\left\| \left( \I-\xi_{2} \H_{n}\right)\w_{n}\right\|_{\$}\leq 
2^{k-1} \left\| \left( \I-\xi_{2} \H_{n}\right)\w_{n}\right\|_{\$}\ts,
$$
since 
$$ \left\|\left( \I-\xi_i\H_{n}\right)\right\|_{\$}=
\max_{\left\|\u\right\|_{\$}=1} \left\|\left( \I-\xi_i\H_{n}\right)\u\right\|_{\$} \leq 
1+ \max_{\left\|\u\right\|_{\$}=1}  \left\|\H_{n}\u\right\|_{\$}\leq 2\ts.$$
Now, let us choose $w_m=m \ts \xi_{2}^{m}=m (-1)^{m}$.
$$\left( \left( \I-\xi_{2}\H_{n}\right)\w_{n} \right)_{m}=\xi_{2}^{m}=(-1)^{m}\ts,
$$
thus its norm
$$h\max_{1\leq l\leq n} 
\left| \sum\limits_{i=1}^{l} (-1)^{m} \right| \to 0\ts,
$$
as $h\to 0$, while
$$\left\| \w_{n}\right\|_{\$}=
h\max_{1\leq l\leq n} \left| \dfrac{l \xi_{2}^{l+1}}{\xi_{2}-1} - \dfrac{\xi_{2}^{l+1}-\xi_{2}}{ (\xi_{2}-1)^{2} } \right|=
h\max_{1\leq l\leq n} \left| \dfrac{l (-1)^{l}}{2} - \dfrac{(-1)^{l+1} + 1}{ 4 } \right|\geq 
\dfrac{h(n-1)}{2} \to \dfrac{1}{2}\ts.
$$
\smallskip

Else $\xi_{2}=e^{i\varphi}$ with $0<\varphi<\pi$ and then $\xi_{3}=e^{-i\varphi}$. The right side can be estimated similarly as before:
\begin{equation*}
\begin{split}
&\left\| \prod\limits_{i=2}^{k}\left( \I-\xi_i\H_{n}\right)\w_{n}\right\|_{\$}\leq 
\left( \prod\limits_{i=4}^{k} \left\|\left( \I-\xi_i\H_{n}\right)\right\|_{\$}\right) 
\left\| \left( \I-\xi_{2} \H_{n}\right)\left( \I-\xi_{3} \H_{n}\right)\w_{n}\right\|_{\$}\leq \\
&2^{k-2} \left\| \left( \I-2\cos \varphi \H_{n} + \H_{n}^{2} \right)\w_{n}\right\|_{\$}\ts.
\end{split}
\end{equation*}
Now, let us choose $w_m=m \ts\Re \xi_{2}^{m}$, where $\Re$ is the notation for the real part.
$$\left( \left( \I-2\cos \varphi \H_{n} + \H_{n}^{2} \right)\w_{n} \right)_{m}=
\begin{cases}
\cos \varphi\ts, &\mbox{if } m=1\\
\cos m\varphi -\cos (m-2)\varphi\ts, &\mbox{if } m\geq 2
\end{cases}
$$
thus its norm
$$h\max_{1\leq l\leq n} 
\left\lbrace \left|\cos \varphi\right|, \left|\cos l\varphi +\cos (l-1)\varphi -1\right| \right\rbrace \to 0\ts,
$$
as $h\to 0$, while $\xi_{2}^{l}$ is either periodic with period $\geq 3$ or dense on the unit circle which means that $\exists c>0$ such that
$$\left\| \w_{n}\right\|_{\$}=
h\max_{1\leq l\leq n} \left|\Re\left(  \dfrac{l \xi_{2}^{l+1}}{\xi_{2}-1} - \dfrac{\xi_{2}^{l+1}-\xi_{2}}{ (\xi_{2}-1)^{2} }\right)  \right|\geq 
h\max_{1\leq l\leq n} l\left|\Re\left(  \dfrac{\xi_{2}^{l+1}}{\xi_{2}-1} \right)  \right| - \dfrac{2h}{ |\xi_{2}-1|^{2} }>c\ts,
$$
if $n$ is large enough. This proves the statement.
\end{proof}

\section{Concluding discussion}

We conclude the paper adding a critical remark. Although Theorem \ref{extS} clearly showed the difference between weakly and strongly stable LMMs the practical side of this result is not clear at all. Stability is only a partial achievement, no doubt an important one, however, we are mostly interested in the convergence of methods.

Simply speaking the problem is the following. A weakly stable method is stable in the norm pair $\left( \left\|\cdot\right\|_{k\infty}\ts, \left\|\cdot\right\|_{k\infty}\right)$ resulting convergence in the norm $\left\|\cdot\right\|_{k\infty}$. For a strongly stable method we can obtain convergence in the same norm not depending on which type of stability ($\left( \left\|\cdot\right\|_{k\infty}\ts, \left\|\cdot\right\|_{k\infty}\right)$ or $\left( \left\|\cdot\right\|_{k\infty}\ts, \left\|\cdot\right\|_{k\$}\right)$) we use.

The profit is shifted to the consistency check.
Note that consistency in the norm $\left\|\cdot\right\|_{k\infty}$ with order $m$ implies consistency in the norm $\left\|\cdot\right\|_{k\$}$ with the same order $m$ or higher. This means that for strongly stable methods we have the freedom to check consistency in the $\left\|\cdot\right\|_{k\$}$ norm. It is a technical gain, see the tricky example \cite[Example 1 in Section 2.2.4]{S73}:
\begin{equation*}
\left(F_n(\u_n)\right)_i =
\begin{cases}
& u_0-c_0 \quad \mbox{, if} \quad i=0 \ts, \\[8pt]
&\dfrac{u_i-u_{i-1}}{h}-f_{i-1} \quad \mbox{, if} \quad 1\leq i\leq n \quad \mbox{odd}\ts,\\[8pt]
&\dfrac{u_i-u_{i-1}}{h}-f_{i} \quad \mbox{, if} \quad 2\leq i\leq n \quad \mbox{even}\ts.
\end{cases}
\end{equation*}
This one-step method is consistent of order 2 with respect to the $\left\|\cdot\right\|_{1\$}$ norm. To get consistency of order 2 with respect to the $\left\|\cdot\right\|_{1\infty}$ norm is less straightforward (however, it is possible).

Consequently, this freedom could be a technical gain. Unfortunately, not more, we can not win an order this way.

\end{document}